\documentclass[12pt]{article}
\oddsidemargin 0 mm
\topmargin -10 mm
\headheight 0 mm
\headsep 0 mm
\textheight 246.2 mm
\textwidth 159.2 mm
\footskip 9 mm
\setlength{\parindent}{0pt}
\setlength{\parskip}{5pt plus 2pt minus 1pt}
\pagestyle{plain}
\usepackage{amssymb}
\usepackage{amsthm}
\newtheorem{Definition}{Definition}%[section]
\newtheorem{Lemma}[Definition]{Lemma}

\newtheorem{Theorem}[Definition]{Theorem}
\newtheorem{Remark}[Definition]{Remark}

\title{Coupled right orthosemirings induced by orthomodular lattices\thanks{This is a post-peer-review, pre-copyedit version of an article published in Order. The final authenticated version is available online at: {\texttt http:/dx.doi.org/10.1007/s11083-015-9383-7.}}}
\author{Ivan~Chajda and Helmut~L\"anger}
\date{}
\begin{document}
\footnotetext[1]{Support of the research of both authors by the Austrian Science Fund (FWF), project I~1923-N25, and the Czech Science Foundation (GA\v CR), project 15-34697L, as well as by the project entitled "Ordered structures for algebraic logic", supported by AKTION Austria -- Czech Republic, project 71p3, is gratefully acknowledged.}
\maketitle
\begin{abstract}
L.~P.~Belluce, A.~Di~Nola and B.~Gerla established a connection between MV-algebras and (dually) lattice ordered semirings by means of so-called coupled semirings. A similar connection was found for basic algebras and semilattice ordered right near semirings by the authors. The aim of this paper is to derive an analogous connection for orthomodular lattices and certain semilattice ordered near semirings via so-called coupled right orthosemirings.
\end{abstract} 

{\bf AMS Subject Classification:} 06C15, 06F25, 16Y30, 03G25

{\bf Keywords:} orthomodular lattice, right near semiring, semilattice ordered right near semiring, coupled right orthosemiring

It is well-known that MV-algebras play a crucial role in the algebraic axiomatization of so-called many-valued Lukasiewicz logics. This is the reason why MV-algebras were intensively studied in the last decades. The connection between MV-algebras and certain semirings was developed by A.~Di~Nola and B.~Gerla (\cite{DG} and \cite{Ge}). They recognized that to every MV-algebra there can be assigned a certain triplet which consists of two semirings, one lattice ordered and the other one dually lattice ordered, and an involutive isomorphism between them. Although this construction which gives a full characterization is interesting, it is more remarkable that, with slight modifications, the mentioned construction can be used also for basic algebras and commutative basic algebras (see \cite{CL1} and \cite{CL2}).

Orthomodular lattices play a similar role for the logic of quantum mechanics as MV-algebras do for Lukasiewicz's many-valued logic. In fact, orthomodular lattices originated in the 1930's where G.~Birkhoff and J.~von~Neumann used them in order to describe quantum events. They considered certain operators in Hilbert spaces and the lattice of closed subspaces of a Hilbert space. For some details, the reader is referred to \cite B and \cite K. Hence, the natural question arises if a similar construction to that in \cite{DG} and \cite{Ge} using semilattice-like structures can be used in order to find a characterization of orthomodular lattices. Surprisingly, this is not only possible but we can use the machinery derived in \cite{CL2} with very small modifications. The resulting structure which fully characterizes orthomodular lattices is called a coupled right orthosemiring which is analogous to the coupled semirings used in \cite{BDF}, \cite{DG} and \cite{Ge}. Hence we will show that our way of representing algebras which are axiomatizations of certain propositional logics is universal in the sense that it does not depend on the fact that the underlying logic is a many-valued logic or the logic of quantum mechanics. Both constructions are similar to each other and differ only in the underlying axioms.

In fact, we will use only very elementary concepts of orthomodular lattices. The reader is referred to the monographs \cite B and \cite K. For some algebraic aspects of orthomodular lattices see e.~g.\ \cite{BH}.

Concerning semilattice-like structures, we use some concepts for semilattices, see \cite{Go} and \cite{KS}. Some concepts concerning lattice ordered semirings or coupled semirings are taken from \cite{CL1}, \cite{DG} and \cite{Ge}, whereas concepts concerning semilattice ordered right near semirings and coupled near semirings can be found in \cite{CL2}. For the reader's convenience, all these concepts are repeated here. Our paper \cite{CL2} also contains some results on right near semirings which are not used here.

We start with the definition of an orthomodular lattice.

\begin{Definition}\label{def1}
An {\em orthomodular lattice} {\rm(}see e.~g.\ {\rm\cite B} and {\rm\cite K)} is an algebra ${\mathcal L}=(L,\vee,\wedge,',$ $0,1)$ of type $(2,2,1,0,0)$ satisfying the following axioms:
\begin{enumerate}
\item[{\rm(i)}] $(L,\vee,\wedge,0,1)$ is a bounded lattice.
\item[{\rm(ii)}] $x\leq y$ implies $x'\geq y'$.
\item[{\rm(iii)}] $(x')'=x$
\item[{\rm(iv)}] $x\leq y$ implies $y=x\vee(y\wedge x')$.
\end{enumerate}
\end{Definition}

\begin{Remark}\label{rem2}
Condition {\rm(iv)} is called the {\em orthomodular law}. From this law it follows that $x\vee x'=x\vee(1\wedge x')=1$, i.~e.\ $x'$ is a lattice-theoretical complement of $x$.
\end{Remark}

In every orthomodular lattice there can be defined the so-called commutation relation:

\begin{Definition}\label{def2}
Let ${\mathcal L}=(L,\vee,\wedge,',$ $0,1)$ be an orthomodular lattice and $a,b\in L$. The elements $a$ and $b$ are said to commute with each other, in symbols $a$ {\rm C} $b$, if $a=(a\wedge b)\vee(a\wedge b')$.
\end{Definition}

This commutation relation has the following properties:

\begin{Lemma}\label{lem1}
{\rm(}cf.\ {\rm\cite K)} Let ${\mathcal L}=(L,\vee,\wedge,',$ $0,1)$ be an orthomodular lattice and $a,b,c\in L$. Then the following hold:
\begin{enumerate}
\item[{\rm(i)}] If $a$ {\rm C} $b$ then $b$ {\rm C} $a$.
\item[{\rm(ii)}] If $a\leq b$ then $a$ {\rm C} $b$.
\item[{\rm(iii)}] If $a$ {\rm C} $b$ then $a$ {\rm C} $b'$.
\item[{\rm(iv)}] If two of the three elements $a$, $b$ and $c$ commute with the third one then $(a\vee b)\wedge c=(a\wedge c)\vee(b\wedge c)$ and $(a\wedge b)\vee c=(a\vee c)\wedge(b\vee c)$.
\end{enumerate}
\end{Lemma}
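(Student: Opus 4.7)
Parts (ii) and (iii) are immediate consequences of the definition. For (ii), observe that $a\le b$ forces $a\wedge b=a$ and $a\wedge b'\le b\wedge b'=0$, giving $(a\wedge b)\vee(a\wedge b')=a$. For (iii), the defining identity $a=(a\wedge b)\vee(a\wedge b')$ is symmetric in $b$ and $b'$ once we invoke $(x')'=x$ from Definition~\ref{def1}(iii), so exchanging the two factors of the join yields $a=(a\wedge b')\vee(a\wedge b'')$, which is precisely $a$ {\rm C} $b'$.

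The symmetry assertion (i) is the first genuinely non-trivial step. The plan is to apply the orthomodular law to the inclusion $a\wedge b\le b$, obtaining
\[
b=(a\wedge b)\vee\bigl(b\wedge(a\wedge b)'\bigr),
\]
so it suffices to identify $b\wedge(a\wedge b)'$ with $b\wedge a'$. One inclusion follows from $a\wedge b\le a$, hence $a'\le(a\wedge b)'$. For the reverse direction I would feed in the hypothesis $a=(a\wedge b)\vee(a\wedge b')$, use De~Morgan to rewrite $(a\wedge b)'$, and apply the orthomodular law a second time in order to trap $b\wedge(a\wedge b)'$ beneath $a'$. The care required here is precisely that only orthomodularity, not full distributivity, is available.

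For (iv), only the inclusion $(a\vee b)\wedge c\le(a\wedge c)\vee(b\wedge c)$ requires attention, since the reverse inequality holds in every lattice. Setting $d:=(a\wedge c)\vee(b\wedge c)$, we have $d\le(a\vee b)\wedge c$, and the orthomodular law yields
\[
(a\vee b)\wedge c=d\vee\bigl((a\vee b)\wedge c\wedge d'\bigr),
\]
so the task reduces to showing that the second summand equals $0$. Using De~Morgan, $d'=(a'\vee c')\wedge(b'\vee c')$, and the remaining expression becomes $(a\vee b)\wedge c\wedge(a'\vee c')\wedge(b'\vee c')$. The commutation hypothesis—combined with parts (i)--(iii) to transfer commutation between the various elements and their orthocomplements—allows one to simplify $c\wedge(a'\vee c')$ to $c\wedge a'$ and similarly $c\wedge(b'\vee c')$ to $c\wedge b'$, whereupon the expression collapses to $(a\vee b)\wedge a'\wedge b'\wedge c\le(a\vee b)\wedge(a\vee b)'=0$.

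The main obstacle is the last paragraph: establishing that under the commutation hypothesis the simplification $c\wedge(a'\vee c')=c\wedge a'$ is legal. This is exactly where the non-distributivity of orthomodular lattices bites, and the commutation hypothesis is the precise extra input needed to force a distributive-style collapse. The three possible configurations of the hypothesis (depending on which of $a,b,c$ is the one commuted with by the other two) are handled uniformly thanks to the symmetry (i).
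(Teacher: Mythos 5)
The paper itself offers no proof of this lemma; it is quoted from Kalmbach's monograph \cite{K}, so your argument has to stand on its own. Parts (ii) and (iii) are fine. In part (i) your skeleton is right, but the step carrying all the weight --- the inclusion $b\wedge(a\wedge b)'\le a'$ --- is left as a declaration of intent (``I would feed in the hypothesis \dots and apply the orthomodular law a second time''). In fact no second application of orthomodularity is needed: the hypothesis gives $a=(a\wedge b)\vee(a\wedge b')\le(a\wedge b)\vee b'$, and taking orthocomplements yields $b\wedge(a\wedge b)'\le a'$ at once; combined with the trivial inclusion $a'\wedge b\le b\wedge(a\wedge b)'$ this identifies $b\wedge(a\wedge b)'$ with $a'\wedge b$ and finishes (i). As written, though, the crucial computation is absent. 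Similarly, the simplification you invoke in (iv), namely $c\wedge(a'\vee c')=c\wedge a'$ under commutation, is asserted rather than proved; it needs the dual form of the orthomodular law ($u\le c$ implies $c\wedge(u\vee c')=u$, obtained by applying Definition~\ref{def1}(iv) to $c'\le u'$) together with the rewriting $a'\vee c'=(a'\wedge c)\vee c'$, which again comes from the defining identity of commutation.

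The genuine error is your last sentence. The hypothesis of (iv) has three configurations, and your computation handles exactly one of them: the one in which $a$ and $b$ both commute with $c$, i.e.\ the distinguished element is the one being met with the join. You claim the other two configurations are ``handled uniformly thanks to the symmetry (i)'', but symmetry of {\rm C} only reverses individual relations; it cannot convert the hypothesis ``$a$ and $c$ both commute with $b$'' into ``$a$ and $b$ both commute with $c$''. For those configurations the standard Foulis--Holland argument needs an additional ingredient --- that commutation is preserved under joins and meets (if $x$ {\rm C} $y$ and $x$ {\rm C} $z$ then $x$ {\rm C} $(y\vee z)$ and $x$ {\rm C} $(y\wedge z)$) --- after which the remaining cases, and the dual identity $(a\wedge b)\vee c=(a\vee c)\wedge(b\vee c)$ that you do not address at all, follow by complementation and (iii). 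So your argument, once the computations are filled in, proves a correct but strictly weaker statement than the lemma; it does, however, cover the only instance the paper actually uses, namely distributing $c$ over $(a\vee c')\vee(b\vee c')$, where both joinands lie above $c'$ and hence commute with $c$.
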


The concept of a right near semiring was introduced by the authors in \cite{CL2}. For the reader's convenience we repeat this definition.

\begin{Definition}\label{def3}
A {\em right near semiring} is an algebra ${\mathcal R}=(R,+,\cdot,0,1)$ of type $(2,2,0,0)$ satisfying the following axioms:
\begin{enumerate}
\item[{\rm(i)}] $(R,+,0)$ is a commutative monoid.
\item[{\rm(ii)}] $(R,\cdot)$ is a groupoid with neutral element $1$.
\item[{\rm(iii)}] $(x+y)z=xz+yz$ for all $x,y,z\in R$
\item[{\rm(iv)}] $x0=0x=0$ for all $x\in R$
\end{enumerate}
If, in addition, $\cdot$ is both associative and distributive with respect to $+$ then ${\mathcal R}$ is called a {\em semiring}.
\end{Definition}

Recall from \cite{BDF} or \cite{DG} that an {\em{\rm MV}-algebra} is an algebra $(A,\oplus,\neg,0)$ of type $(2,1,0)$ such that $(A,\oplus,0)$ is a commutative monoid and it satisfies the identities $¬¬x = x$, $x\oplus\neg0=\neg0$ and $\neg(\neg x\oplus y)\oplus y=\neg(\neg y\oplus x)\oplus x$. We put $1:=\neg0$. In the following let $x,y\in A$. It is well-known that every MV-algebra can be considered as a bounded lattice with an antitone involution $(A,\vee,\wedge,\neg,0,1)$ where 
\begin{equation}\label{equ1}
x\vee y:=\neg(\neg x\oplus y)\oplus y\mbox{ and }x\wedge y:=\neg(\neg x\vee\neg y).
\end{equation}
The lattice order $\leq$ is given by
\begin{equation}\label{equ2}
x\leq y\mbox{ if and only if }\neg x\oplus y=1.
\end{equation}
It can be easily checked that
\begin{equation}\label{equ3}
x\leq y\mbox{ if and only if }\neg y\leq\neg x.
\end{equation}
Moreover, a binary operation $\odot$ on $A$ can be defined by
\begin{equation}\label{equ4}
x\odot y:=\neg(\neg x\oplus\neg y).
\end{equation}
It is easy to prove that $(A,\vee,\odot,0,1)$ and $(A,\wedge,\oplus,1,0)$ are semirings. \\
Recall from \cite{BDF} that a semiring ${\mathcal S}=(A,+,\cdot,0,1)$ is called {\em lattice-ordered} if there exists a lattice $(A,\vee,\wedge)$ such that $x+y=x\vee y$ and $xy\leq x\wedge y$ for all $x,y\in A$. Dually, ${\mathcal S}$ is called {\em dually lattice-ordered} if there exists a lattice $(A,\vee,\wedge)$ such that $xy\geq x\vee y$ and $x+y=x\wedge y$ for all $x,y\in A$. It was shown in \cite{Ge} that $(A,\vee,\odot,0,1)$ and $(A,\wedge,\oplus,1,0)$ are lattice-ordered and dually lattice-ordered, respectively. \\
In \cite{Ge} to every MV-algebra ${\mathcal A}=(A,\oplus,\neg,0)$ there was assigned the so-called {\em coupled semiring}
\[
((A,\vee,\odot,0,1), (A,\wedge,\oplus,1,0),\neg)
\]
such that $x\oplus(\neg x\odot y)=x\vee y$ for all $x,y\in A$. \\
If, conversely,
\[
{\mathcal T}=((A,\vee,\cdot,0,1),(A,\wedge,+,1,0),\alpha)
\]
is a triple consisting of a lattice ordered semiring ${\mathcal A}_1:=(A,\vee,\cdot,0,1)$, a dually lattice-ordered semiring ${\mathcal A}_2:=(A,\wedge,+,1,0)$ and an involutive isomorphism $\alpha$ of ${\mathcal A}_1$ onto ${\mathcal A}_2$ such that $x+(\alpha(x)y)=x\vee y$ for all $x,y\in A$ then $(A,+,\alpha,0)$ is an MV-algebra whose coupled semiring is just ${\mathcal T}$. \\
{\em Basic algebras} were introduced by the first author as a generalization of MV-algebras in the sense that the binary operation $\oplus$ need not be neither associative nor commutative. More precisely, a {\em basic algebra} is an algebra $(A,\oplus,\neg,0)$ of type $(2,1,0)$ satisfying the identities
\begin{eqnarray*}
x\oplus0 & = & x, \\
\neg(\neg x) & = & x, \\
\neg(\neg x\oplus y)\oplus y & = & \neg(\neg y\oplus x)\oplus x\mbox{ and} \\
\neg(\neg(\neg(x\oplus y)\oplus y)\oplus z)\oplus(x\oplus z) & = & 1
\end{eqnarray*}
where $1:=\neg0$. \\
The concept of a basic algebra has no connection to the so-called basic logic introduced by P.~Hajek. The algebra connected with Hajek's basic logic is called BL-algebra. \\
In every basic algebra there can be introduced a partial order relation as given by (\ref{equ2}) which becomes a lattice order with respect to the operations defined by (\ref{equ1}). Moreover, it satisfies also (\ref{equ3}). It is known that a basic algebra is an MV-algebra if and only if $\oplus$ is associative (and, in this case, it is also commutative). On the other hand, there exist commutative basic algebras which are not MV-algebras despite the fact that every finite commutative basic algebra is an MV-algebra. \\
This analogy motivated us in our previous papers \cite{CL2} and \cite{CL1} to modify the aforementioned construction of coupled semirings also for basic algebras and commutative basic algebras, respectively. However, this is possible only if the concept of semiring is substituted by that of a right near semiring  and near semiring, respectively. However, the concept of a (dually) lattice-ordered right near semiring has to be modified as follows:

\begin{Definition}\label{def5}
A right near semiring $(R,+,\cdot,0,1)$ is called
\begin{itemize}
\item {\em $\vee$-semilattice ordered} if there exists a join-semilattice operation $\vee$ on $R$ such that $x+y=x\vee y$ and $xy\leq y$ for all $x,y\in R$ with respect to the induced order.
\item {\em $\wedge$-semilattice ordered} if there exists a meet-semilattice operation $\wedge$ on $R$ such that $x+y=x\wedge y$ and $xy\geq y$ for all $x,y\in R$ with respect to the induced order.
\item a {\em near semiring} if $\cdot$ is distributive with respect to $+$.
\item {\em commutative} if $\cdot$ is commutative.
\end{itemize}
\end{Definition}

\begin{Remark}\label{rem1}
Condition~{\rm(iii)} is called the {\em right distributive law}. Of course, every commutative right near semiring is a commutative near semiring {\rm(}in the sense of {\rm\cite{CL1})}.
\end{Remark}

In order to be able to introduce a triple construction of coupled right near semirings for orthomodular lattices we must adapt the corresponding notions as follows and define coupled right orthosemirings:

\begin{Definition}\label{def4}
A {\em coupled right orthosemiring} is an ordered triple
\[
((R,\vee,\cdot,0,1),(R,\wedge,\ast,1,0),\alpha)
\]
satisfying the following conditions:
\begin{enumerate}
\item[{\rm(R1)}] $(R,\vee,\wedge)$ is a lattice.
\item[{\rm(R2)}] $(R,\vee,\cdot,0,1)$ is a $\vee$-semilattice ordered right near semiring.
\item[{\rm(R3)}] $(R,\wedge,\ast,1,0)$ is a $\wedge$-semilattice ordered right near semiring.
\item[{\rm(R4)}] $\alpha$ is an involutive isomorphism between $(R,\vee,\cdot,0,1)$ and $(R,\wedge,\ast,1,0)$.
\item[{\rm(R5)}] $(x\wedge\alpha(y))\vee y=x\ast y$ for all $x,y\in R$
\item[{\rm(R6)}] $y\ast(x\wedge y)=y$ for all $x,y\in R$
\end{enumerate}
\end{Definition}

Now we can assign to every orthomodular lattice a coupled right orthosemiring in some natural way. The term operations occurring in the following theorem are the so-called {\em Sasaki projection} and its dual version (see \cite K).

\begin{Theorem}\label{th1}
Let ${\mathcal L}=(L,\vee,\wedge,',$ $0,1)$ be an orthomodular lattice and define two binary operations $\oplus$ and $\odot$ on $L$ by
\begin{eqnarray*}
x\oplus y & := & (x\wedge y')\vee y\mbox{ and} \\
x\odot y & := & (x\vee y')\wedge y
\end{eqnarray*}
for all $x,y\in L$. Then
\[
{\bf N}({\mathcal L}):=((L,\vee,\odot,0,1),(L,\wedge,\oplus,1,0),')
\]
is a coupled right orthosemiring.
\end{Theorem}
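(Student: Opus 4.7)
My plan is to verify conditions (R1)--(R6) in turn. (R1) is immediate from the lattice structure of $\mathcal{L}$, and (R5) is just the definition of $\oplus$ unfolded, so the substance is in (R2), (R3), (R4), and (R6).

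For (R2), the monoid structure of $(L,\vee,0)$ is automatic, and the neutral element ($1$) and annihilator ($0$) properties for $\odot$ are quick simplifications using $0'=1$, $1'=0$, and $x\wedge x'=0$; the inequality $x\odot y\le y$ is immediate from the definition. The one nontrivial axiom is the right distributive law
\[
(x\vee y)\odot z=(x\odot z)\vee(y\odot z),
\]
which rewrites as $(a\vee b)\wedge c=(a\wedge c)\vee(b\wedge c)$ with $a:=x\vee z'$, $b:=y\vee z'$, $c:=z$. Since $z'\le a$ and $z'\le b$, Lemma~\ref{lem1}(ii) yields $z'\,\mathrm{C}\,a$ and $z'\,\mathrm{C}\,b$; applying parts~(i) and~(iii) promotes these to $a\,\mathrm{C}\,c$ and $b\,\mathrm{C}\,c$, so Lemma~\ref{lem1}(iv) delivers the required identity.

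Condition (R3) is the order-theoretic dual, verified by the mirror argument using $a:=x\wedge z'$ and $b:=y\wedge z'$, which satisfy $a,b\le z'$ and hence $a\,\mathrm{C}\,z$, $b\,\mathrm{C}\,z$. For (R4), I would appeal to the standard fact that in an ortholattice the antitone involution $'$ obeys De~Morgan, so it swaps $\vee\leftrightarrow\wedge$ and $0\leftrightarrow1$; the short computation
\[
(x\odot y)'=\bigl((x\vee y')\wedge y\bigr)'=(x'\wedge y)\vee y'=x'\oplus y'
\]
shows that it also intertwines $\odot$ and $\oplus$, and involutivity is Definition~\ref{def1}(iii).

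Finally, (R6) unfolds to
\[
y\oplus(x\wedge y)=\bigl(y\wedge(x\wedge y)'\bigr)\vee(x\wedge y)=y,
\]
which is precisely the orthomodular law (Definition~\ref{def1}(iv)) applied to $x\wedge y\le y$. I expect the right distributive laws in (R2) and (R3) to be the main obstacle: they are the only places where Lemma~\ref{lem1}(iv) --- and hence orthomodularity in its full strength, rather than merely the complement property --- is genuinely needed.
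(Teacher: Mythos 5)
Your proposal is correct and follows essentially the same route as the paper's proof: the distributive laws are reduced to Lemma~\ref{lem1}(iv) via the observation that $x\vee z'$ and $y\vee z'$ (resp.\ $x\wedge z'$ and $y\wedge z'$) commute with $z$, and (R6) is exactly the orthomodular law applied to $x\wedge y\leq y$. You merely spell out two points the paper leaves implicit --- which parts of Lemma~\ref{lem1} establish the commutation, and the De~Morgan computation behind (R4), which the paper dismisses as evident.
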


\begin{proof}
Let $a,b,c\in L$. (R1) and (R4) are evident.
\begin{enumerate}
\item[(R2)] Obviously, $(L,\vee,0)$ is a commutative monoid. Moreover we have
\begin{eqnarray*}
a\odot1 & = & (a\vee1')\wedge1=a\vee0=a\mbox{ and} \\
1\odot a & = & (1\vee a')\wedge a=1\wedge a=a.
\end{eqnarray*}
Hence $(L,\odot,1)$ is a groupoid with neutral element. Moreover, using Lemma~\ref{lem1} we obtain
\begin{eqnarray*}
(a\vee b)\odot c & = & ((a\vee b)\vee c')\wedge c=(a\vee b\vee c')\wedge c=((a\vee c')\vee(b\vee c'))\wedge c= \\
& = & ((a\vee c')\wedge c)\vee((b\vee c')\wedge c)=(a\odot c)\vee(b\odot c).
\end{eqnarray*}
Finally,
\begin{eqnarray*}
a\odot0 & = & (a\vee0')\wedge0=0\mbox{ and} \\
0\odot a & = & (0\vee a')\wedge a=a'\wedge a=0.
\end{eqnarray*}
This shows that $(L,\vee,\odot,0,1)$ is a right near semiring. Because of $a\odot b=(a\vee b')\wedge b\leq b$, this right near semiring is $\vee$-semilattice ordered.
\item[(R3)] Obviously, $(L,\wedge,1)$ is a commutative monoid. Moreover we have
\begin{eqnarray*}
a\oplus0 & = & (a\wedge0')\vee0=a\wedge1=a\mbox{ and} \\
0\oplus a & = & (0\wedge a')\vee a=0\vee a=a.
\end{eqnarray*}
Hence $(L,\oplus,0)$ is a groupoid with neutral element. Moreover, using Lemma~\ref{lem1} we obtain
\begin{eqnarray*}
(a\wedge b)\oplus c & = & ((a\wedge b)\wedge c')\vee c=(a\wedge b\wedge c')\vee c=((a\wedge c')\wedge(b\wedge c'))\vee c= \\
& = & ((a\wedge c')\vee c)\wedge((b\wedge c')\vee c)=(a\oplus c)\wedge(b\oplus c).
\end{eqnarray*}
Finally,
\begin{eqnarray*}
a\oplus1 & = & (a\wedge1')\vee1=1\mbox{ and} \\
1\oplus a & = & (1\wedge a')\vee a=a'\vee a=1.
\end{eqnarray*}
This shows that $(L,\wedge,\oplus,1,0)$ is a right near semiring. Because of $a\oplus b=(a\wedge b')\vee b\geq b$, this right near semiring is $\wedge$-semilattice ordered.
\item[(R5)] follows from the definition of $\oplus$.
\item[(R6)] According to orthomodularity we have
\[
b\oplus(a\wedge b)=(b\wedge(a\wedge b)')\vee(a\wedge b)=b.
\]
\end{enumerate}
\end{proof}

Next we are going to prove the converse of Theorem~\ref{th1}, i.~e.\ we will show that to each coupled right orthosemiring we can assign an orthomodular lattice in a natural way.

\begin{Theorem}\label{th2}
Let ${\mathcal N}=((R,\vee,\cdot,0,1),(R,\wedge,\ast,1,0),\alpha)$ be a coupled right orthosemiring. Then
\[
{\bf L}({\mathcal N}):=(R,\vee,\wedge,\alpha,0,1)
\]
is an orthomodular lattice.
\end{Theorem}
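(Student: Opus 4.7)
My plan is to verify in turn the four defining conditions of an orthomodular lattice as listed in Definition~\ref{def1}: that $(R,\vee,\wedge,0,1)$ is a bounded lattice, that $\alpha$ is antitone, that $\alpha$ is involutive, and that the orthomodular law holds. Three of these four are essentially immediate from the axioms (R1)--(R6), so the only real substance is the orthomodular law.

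For the bounded lattice structure, (R1) already supplies a lattice $(R,\vee,\wedge)$; to identify $0$ and $1$ as its bounds I invoke the near semiring axioms from Definition~\ref{def3}. In $(R,\vee,\cdot,0,1)$ the element $0$ is the additive neutral, so $0\vee x=x$ and hence $0\le x$; dually, in $(R,\wedge,\ast,1,0)$ the element $1$ is the additive neutral, so $1\wedge x=x$ and hence $x\le 1$. Involutivity $\alpha(\alpha(x))=x$ is part of (R4). For antitonicity I exploit that, by (R4), $\alpha$ is in particular a $\vee$-to-$\wedge$ homomorphism, so $\alpha(x\vee y)=\alpha(x)\wedge\alpha(y)$; thus $x\le y$, i.e.\ $x\vee y=y$, entails $\alpha(x)\wedge\alpha(y)=\alpha(y)$, which is $\alpha(y)\le\alpha(x)$.

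The heart of the argument is the orthomodular law. Assume $x\le y$. The key observation is that (R5) reads $(u\wedge\alpha(v))\vee v=u\ast v$ for arbitrary $u,v\in R$, so relabelling $u:=y$, $v:=x$ and using commutativity of $\vee$ yields
\[
x\vee(y\wedge\alpha(x))=y\ast x.
\]
Since $x\le y$ gives $x\wedge y=x$, condition (R6) in the form $y\ast(x\wedge y)=y$ immediately delivers $y\ast x=y$, which combined with the previous display is precisely the orthomodular law $y=x\vee(y\wedge\alpha(x))$.

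The main obstacle is spotting the clean interplay between (R5) and (R6) that collapses to the orthomodular law; once the relabelling of (R5) is noticed and (R6) is recognised as the statement that $y\ast x$ depends only on $x\wedge y$, the rest is bookkeeping. Everything else amounts to extracting monoid-neutrality of $0$ and $1$ from the near semiring axioms and reading off antitonicity from $\alpha$ being an isomorphism of the two semilattice reducts.
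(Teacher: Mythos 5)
Your proposal is correct and follows essentially the same route as the paper: the bounded lattice structure and antitone involution are read off from (R1)--(R4), and the orthomodular law is obtained exactly as in the paper by chaining $x\vee(y\wedge\alpha(x))=y\ast x=y\ast(x\wedge y)=y$ via (R5) and (R6). Your extra detail on why $\alpha$ is antitone (via the $\vee$-to-$\wedge$ homomorphism property) is a fair elaboration of what the paper leaves implicit.
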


\begin{proof}
Let $a,b\in R$. According to (R1), $(R,\vee,\wedge)$ is a lattice. Because of (R2) and (R3), $(R,\vee,0)$ and $(R,\wedge,1)$ are monoids and hence $(R,\vee,\wedge,0,1)$ is a bounded lattice. According to (R4), $\alpha$ is an antitone involution on this bounded lattice. If $a\leq b$ then
\[
a\vee(b\wedge\alpha(a))=(b\wedge\alpha(a))\vee a=b\ast a=b\ast(a\wedge b)=b
\]
according to (R5) and (R6).
\end{proof}

Finally, we can show that the just introduced correspondence between orthomodular lattices and coupled right orthosemirings is one-to-one.

\begin{Theorem}\label{th3}
Let ${\mathcal L}=(L,\vee,\wedge,',0,1)$ be an orthomodular lattice. Then ${\bf L}({\bf N}({\mathcal L}))={\mathcal L}$.
\end{Theorem}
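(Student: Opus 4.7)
The plan is to observe that this equality holds essentially by inspection once one unpacks the two constructions. Unlike Theorems~\ref{th1} and~\ref{th2}, which required genuine verification of axioms, Theorem~\ref{th3} only asserts that the round trip through $\mathbf N$ followed by $\mathbf L$ returns the same algebra on the nose, and both constructions are set up so that the lattice operations, the bounds, and the unary operation are transported without modification.

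I would proceed in three short steps. First, I would spell out $\mathbf N(\mathcal L)$ according to Theorem~\ref{th1}: it is the triple
\[
\bigl((L,\vee,\odot,0,1),(L,\wedge,\oplus,1,0),'\bigr),
\]
where the underlying set, the binary operations $\vee$ and $\wedge$, the constants $0,1$, and the unary operation $'$ are exactly those of $\mathcal L$ (only the auxiliary operations $\odot$ and $\oplus$ are newly defined). Second, I would apply Definition of $\mathbf L$ from Theorem~\ref{th2}, which extracts the first coordinate of the first near semiring, the first coordinate of the second near semiring, the involutive isomorphism, and the two constants, producing $(L,\vee,\wedge,',0,1)$. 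Third, I would observe that this is by definition~$\mathcal L$.

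I do not anticipate any genuine obstacle: there is nothing to compute and no axiom to re-verify, because $\mathbf L$ discards precisely the auxiliary operations $\odot$ and $\oplus$ introduced by $\mathbf N$ and keeps exactly the primitive operations of $\mathcal L$. The only care needed is to make sure that, in the formula for $\mathbf L$, the symbols $\vee$, $\wedge$, $\alpha$, $0$, $1$ are indeed read off the coordinates of the triple $\mathbf N(\mathcal L)$ in the way one expects, so that the resulting quintuple coincides with $(L,\vee,\wedge,',0,1)$ symbol for symbol. Accordingly, my written proof would be a single sentence noting that the claim is immediate from the definitions of $\mathbf N$ and $\mathbf L$.
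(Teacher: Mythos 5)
Your proposal is correct and matches the paper's approach: the paper's entire proof is ``This is clear,'' and your unpacking of why $\mathbf{L}$ simply discards the auxiliary operations $\odot$ and $\oplus$ while retaining the primitive operations of $\mathcal{L}$ is exactly the observation the authors are relying on. No further comment is needed.
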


\begin{proof}
This is clear.
\end{proof}

\begin{Theorem}\label{th4}
Let ${\mathcal N}=((R,\vee,\cdot,0,1),(R,\wedge,\ast,1,0),\alpha)$ be a coupled right orthosemiring. Then ${\bf N}({\bf L}({\mathcal N}))={\mathcal N}$.
\end{Theorem}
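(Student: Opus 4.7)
The plan is to unfold the two constructions and compare components one by one. By Theorem~\ref{th2}, $\mathbf{L}(\mathcal{N})=(R,\vee,\wedge,\alpha,0,1)$ is an orthomodular lattice with the same underlying lattice and with $\alpha$ as its orthocomplementation. Applying the construction of Theorem~\ref{th1} to this orthomodular lattice produces the triple
\[
\mathbf{N}(\mathbf{L}(\mathcal{N}))=((R,\vee,\odot',0,1),(R,\wedge,\oplus',1,0),\alpha)
\]
with $x\odot' y=(x\vee\alpha(y))\wedge y$ and $x\oplus' y=(x\wedge\alpha(y))\vee y$. Since the lattice reducts $(R,\vee,\wedge)$, the constants $0,1$, and the involution $\alpha$ all transfer unchanged, it suffices to verify the two identities $\odot'=\cdot$ and $\oplus'=\ast$ on $R$.

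The identity $\oplus'=\ast$ is immediate: for all $x,y\in R$, $x\oplus' y=(x\wedge\alpha(y))\vee y=x\ast y$ by axiom (R5). So the second component of $\mathbf{N}(\mathbf{L}(\mathcal{N}))$ already agrees with that of $\mathcal{N}$ on the nose.

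The identity $\odot'=\cdot$ is the step that needs a short argument; this is where I expect the only mild obstacle, namely extracting a ``dual'' of (R5). Since by (R4) $\alpha$ is an involutive isomorphism from $(R,\vee,\cdot,0,1)$ onto $(R,\wedge,\ast,1,0)$, applying $\alpha$ to both sides of (R5) yields
\[
\alpha(x)\cdot\alpha(y)=\alpha(x\ast y)=\alpha\bigl((x\wedge\alpha(y))\vee y\bigr)=(\alpha(x)\vee y)\wedge\alpha(y),
\]
where in the last equality we use that $\alpha$ interchanges $\vee$ with $\wedge$ (a consequence of $\alpha$ being an involutive semiring isomorphism that sends $+=\vee$ to $+=\wedge$) and that $\alpha^2=\mathrm{id}$. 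Substituting $u=\alpha(x)$, $v=\alpha(y)$ and using involutivity once more gives $u\cdot v=(u\vee\alpha(v))\wedge v$, i.e.\ $x\cdot y=x\odot' y$ for all $x,y\in R$.

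Combining the two identities with the already matching lattice structure, constants and involution, we obtain $\mathbf{N}(\mathbf{L}(\mathcal{N}))=\mathcal{N}$. Together with Theorem~\ref{th3}, this establishes the desired bijective correspondence between orthomodular lattices and coupled right orthosemirings.
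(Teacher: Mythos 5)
Your proposal is correct and follows essentially the same route as the paper: verify $\oplus=\ast$ directly from (R5), then obtain $\cdot$ from $\ast$ by conjugating with the involutive isomorphism $\alpha$ of (R4). The paper phrases the second step as $a\odot b=\alpha(\alpha(a)\oplus\alpha(b))=\alpha(\alpha(a)\ast\alpha(b))=ab$, which is the same computation you perform by applying $\alpha$ to (R5).
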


\begin{proof}
Let ${\bf N}({\bf L}({\mathcal N}))=((R,\vee,\odot,0,1),(R,\wedge,\oplus,1,0),\alpha)$ and $a,b\in R$. Then
\[
a\oplus b=(a\wedge\alpha(b))\vee b=a\ast b
\]
according to (R5). From this and (R4) it finally follows
\[
a\odot b=\alpha(\alpha(a)\oplus\alpha(b))=\alpha(\alpha(a)\ast\alpha(b))=ab.
\]
\end{proof}

In view of the previous results the question arises if similar constructions would be possible for other types of algebras. We believe that this is true, but the question arises how far a meaningful generalization of the notion of a semiring could go. We think that (right) distributivity of $\cdot$ with respect to $+$ is an essential property which cannot be omitted. Hence corresponding constructions for algebras like Heyting algebras do not look realistic in our opinion.

Authors' addresses:

Ivan Chajda \\
Palack\'y University Olomouc \\
Faculty of Science \\
Department of Algebra and Geometry \\
17.\ listopadu 12 \\
77146 Olomouc \\
Czech Republic \\
ivan.chajda@upol.cz

Helmut L\"anger \\
TU Wien \\
Faculty of Mathematics and Geoinformation \\
Institute of Discrete Mathematics and Geometry \\
Wiedner Hauptstra\ss e 8-10 \\
1040 Vienna \\
Austria \\
helmut.laenger@tuwien.ac.at
\end{document}